\DeclareMathAlphabet{\pazocal}{OMS}{zplm}{m}{n}
\newcommand{\B}{\mathbb{B}}  
\newcommand{\R}{\mathbb{R}}
\newcommand{\F}{\pazocal{F}}
\newcommand{\U}{\pazocal{U}}
\newcommand{\K}{\pazocal{K}}
\newcommand{\Vpazo}{\pazocal{V}}
\newcommand{\Wpazo}{\pazocal{W}}
\newcommand{\M}{\pazocal{M}}
\newcommand{\Qpazo}{\pazocal{Q}}
\newcommand{\Hpazo}{\pazocal{H}}
\newcommand{\Npazo}{\pazocal{N}}
\newcommand{\Ppazo}{\pazocal{P}}
\newcommand{\Spazo}{\pazocal{S}}
\newcommand{\Lcal}{\mathcal{L}}
\newcommand{\Vcal}{\mathcal{V}}
\newcommand{\Pcal}{\mathcal{P}}
\newcommand{\Scal}{\mathcal{S}}
\newcommand{\Ucal}{\mathcal{U}}
\newcommand{\Ocal}{\mathcal{O}}
\newcommand{\supp}{\textnormal{supp}}
\newcommand{\Lip}{\textnormal{Lip}}
\newcommand{\AC}{\textnormal{AC}}
\newcommand{\Div}{\textnormal{div}}
\newcommand{\textbn}[1]{\textnormal{\textbf{#1}}}
\newcommand{\co}{\overline{\textnormal{co}} \,}
\newcommand{\yb}{\boldsymbol{y}}
\newcommand{\ub}{\boldsymbol{u}}
\newcommand{\vb}{\boldsymbol{v}}
\newcommand{\INTDom}[3]{\int_{#2} #1 \textnormal{d} #3}
\newcommand{\INTSeg}[4]{\int_{#3}^{#4} #1 \textnormal{d} #2}
\newcommand{\NormC}[3]{\left\| #1  \right\| _ {C^{#2}(#3)}}
\newcommand{\Norm}[1]{\parallel \hspace{-0.1cm} #1 \hspace{-0.1cm} \parallel}
\newtheorem{rmk}{Remark}
\newtheorem{Def}{Definition}
\newtheorem{thm}{Theorem}
\newtheorem{prop}{Proposition}
\newenvironment{taggedhyp}[1]
    {\taggedhypx}
    {\endtaggedhypx}
\title{Mean-Field Optimal Control of Continuity Equations and Differential Inclusions}
\author{Beno\^it Bonnet\thanks{CNRS, IMJ-PRG, UMR 7586, Sorbonne Université, 4 place Jussieu,  75252  Paris,  France. {\tt\small benoit.bonnet@imj-prg.fr}} \hspace{0.05cm} and H\'el\`ene Frankowska\thanks{CNRS, IMJ-PRG, UMR 7586, Sorbonne Université, 4 place Jussieu,  75252  Paris,  France. {\tt\small helene.frankowska@imj-prg.fr}}%
}
\begin{document}

\maketitle

%%%%%%%%%%%%%%%%%%%%%%%%%%%%%%%%%%%%%%%%%%%%%%%%%%%%%%%%%%%%%%%%%%%%%%%%%%%%%%%%%%%%%%%
% 							          ABSTRACT AHEAD                                  %
%%%%%%%%%%%%%%%%%%%%%%%%%%%%%%%%%%%%%%%%%%%%%%%%%%%%%%%%%%%%%%%%%%%%%%%%%%%%%%%%%%%%%%%

\begin{abstract}
In this article, we propose a new unifying framework for the investigation of multi-agent control problems in the mean-field setting. Our approach is based on a new definition of differential inclusions for continuity equations formulated in the Wasserstein spaces of optimal transport. The latter allows to extend several known results of the classical theory of differential inclusions, and to prove an exact correspondence between solutions of differential inclusions and control systems. We show its appropriateness on an example of leader-follower evacuation problem.
\end{abstract}

%%%%%%%%%%%%%%%%%%%%%%%%%%%%%%%%%%%%%%%%%%%%%%%%%%%%%%%%%%%%%%%%%%%%%%%%%%%%%%%%%%%%%%%
% 								    NEW SECTION AHEAD                                 %
%%%%%%%%%%%%%%%%%%%%%%%%%%%%%%%%%%%%%%%%%%%%%%%%%%%%%%%%%%%%%%%%%%%%%%%%%%%%%%%%%%%%%%%

\section{Introduction}

The study of self-organisation in large-scale dynamical systems has become a prominent topic in applied mathematics during the course of the past two decades. \textit{Multi-agent systems} appear in an increasingly vast number of applications, ranging from pedestrian dynamics \cite{CPT} and herds analysis \cite{CS1} to fleets of autonomous vehicles \cite{Bullo2009} and opinion formation models \cite{HK}. In this context, control and optimal control problems on multi-agent systems do arise as well. Due to their high underlying dimensionality, the latter are usually studied in the so-called \textit{mean-field approximation} framework, where the discrete collection of agents is replaced by its spatial density. The time evolution of such densities can usually be modelled by means of \textit{non-local continuity equations} of the form
\begin{equation*}
\partial_t \mu(t) + \Div \big( v(t,\mu(t))\mu(t) \big) = 0,
\end{equation*}
where the driving velocity field $v(t,\mu(t))$ depends on the whole density at each time. This type of non-local interactions often takes the form of convolution with interaction kernels, see Section \ref{section:Example} below for an example of such a situation. In this setting, the system is represented by a curve $\mu(\cdot)$ of \textit{probability measures}. Building on far-reaching progresses in the theory of \textit{optimal transport} (see e.g. \cite{AGS}), an important research effort has been directed towards the generalisation of tools of control theory to the metric setting of the space of probability measures. The corresponding contributions include controllability results \cite{Duprez2019}, existence of optimal controls \cite{LipReg,FLOS,MFOC}, optimality conditions \cite{MFPMP,PMPWassConst,PMPWass,Jimenez2020} and numerical methods \cite{Burger2020}. 

In vector spaces, differential inclusions of the form 
\begin{equation*}
\dot x(t) \in F(t,x(t)),
\end{equation*}
have been known to provide a synthetic and powerful way to describe control systems. Indeed under very mild assumptions, any control system can be equivalently rewritten as a differential inclusion. It is then possible to recover many important results of control theory, e.g. controllability results, existence of optimal controls, necessary optimality conditions, regularity properties of the value function, etc., from general properties of the solutions of differential inclusions. We refer the reader e.g. to \cite{Aubin1990} for a detailed study of this topic.% and historical references.

In the context of mean-field control systems, a first approach to differential inclusions in Wasserstein spaces was proposed e.g. in \cite{Jimenez2020,CavagnariMP2018,Cavagnari2018}. However, this formalism did not provide a general one-to-one correspondence between inclusions and control systems, as the controls could depend both on the state $\mu(\cdot)$ of the system and the characteristic curves on which it is supported (for the superposition principle, see e.g. \cite{AmbrosioC2014}). Besides, the idea of considering curves of measures generated by all the \textit{pointwise} solutions of a differential inclusion is less meaningful in terms of Wasserstein geometry than the \textit{functional} approach that we develop here in Section \ref{section:DiffInc}. 

\medskip

The structure of the article is the following. In Section \ref{section:Preli}, we recall classical notions of optimal transport theory and set-valued analysis. In Section \ref{section:DiffInc}, we formulate differential inclusions in Wasserstein spaces and state results on the compactness and closure of their solution sets. We then apply the latter in Section \ref{section:OCP} to a general Mayer optimal control problem with mixed running constraints, which we illustrate on a particular example of evacuation scenario with soft congestion in Section \ref{section:Example}.

%%%%%%%%%%%%%%%%%%%%%%%%%%%%%%%%%%%%%%%%%%%%%%%%%%%%%%%%%%%%%%%%%%%%%%%%%%%%%%%%%%%%%%%
% 								    NEW SECTION AHEAD                                 %
%%%%%%%%%%%%%%%%%%%%%%%%%%%%%%%%%%%%%%%%%%%%%%%%%%%%%%%%%%%%%%%%%%%%%%%%%%%%%%%%%%%%%%%

\section{Preliminaries}
\label{section:Preli}

In what follows, we recollect known facts about optimal transport and set-valued analysis. We point to the reference monographs \cite{AGS} and \cite{Aubin1990} respectively for a comprehensive introduction to these topics.

Let $\Pcal(\R^d)$ be the set of Borel probability measures over $\R^d$ endowed with the narrow topology induced by 
\begin{equation}
\label{eq:Narrow}
\mu \in \Pcal(\R^d) \mapsto \INTDom{\phi(x)}{\R^d}{\mu(x)}, 
\end{equation}
for all $\phi \in C^0_b(\R^d)$. Here, $(C^0_b(\R^d),\Norm{ \hspace{-0.05cm} \cdot \hspace{-0.05cm}}_{C^0})$ is the set of continuous and bounded functions. We will denote by $\Lip(\phi \, ; \Omega)$ the Lipschitz constant of $\phi(\cdot)$ over $\Omega \subset \R^d$, and by $B(0,R)$ the closed ball of radius $R >0$ centred at zero in $\R^d$. Given $p \in [1,+\infty)$, $L^p(\Omega,\R^d)$ and $W^{1,p}(\Omega,\R^d)$ stand respectively for the Banach spaces of $p$-integrable and Sobolev maps with respect to the standard Lebesgue measure $\Lcal^d$. Define the \textit{momentum} of $\mu \in \Pcal(\R^d)$ by
\begin{equation*}
\M_1(\mu) := \INTDom{|x|}{\R^d}{\mu(x)}. 
\end{equation*}
We henceforth denote by $\Pcal_1(\R^d)$ the set of measures with finite momentum, and by $\Pcal_c(\R^d) \subset \Pcal(\R^d)$ the set of measures whose support
\begin{equation*}
\supp(\mu) := \big\{ x \in \R^d ~\text{s.t.}~ \mu(\Npazo_x) > 0 ~\text{$\forall \Npazo_x$ neigh. of $x$} \big\}
\end{equation*}
is compact. 

Given a measure $\mu \in \Pcal(\R^d)$, define its \textit{pushforward} through a Borel map $f : \R^d \rightarrow \R^d$ by $f_{\#} \mu(B) := \mu(f^{-1}(B))$ for any Borel set $B \subset \R^d$. 

\begin{Def}
Let $\pi^1,\pi^2 : \R^{2d} \rightarrow \R^d$ be the projection operations onto the first and second factor. A measure $\gamma \in \Pcal(\R^{2d})$ is a \textit{transport plan} between $\mu,\nu \in \Pcal(\R^d)$ if $\pi^1_{\#} \gamma = \mu$ and $\pi^2_{\#} \gamma = \nu$. The set of all transport plans is denoted by $\Gamma(\mu,\nu)$. 
\end{Def}

\begin{Def}
The \textit{Wasserstein distance} $W_1(\mu,\nu)$ between two measures $\mu,\nu \in \Pcal_1(\R^d)$ is defined by 
\begin{equation*}
W_1(\mu,\nu) := \min_{\gamma \in \Gamma(\mu,\nu)} \INTDom{|x-y|}{\R^{2d}}{\gamma(x,y)}.
\end{equation*}
It is known that the $W_1$-topology metrises the narrow topology induced by \eqref{eq:Narrow}, in the sense that 
\begin{equation*}
W_1(\mu_N,\mu) \underset{N \rightarrow +\infty}{\longrightarrow} 0 ~\Longleftrightarrow~ \left\{
\begin{aligned}
\mu_N & \underset{N \rightarrow +\infty}{~\rightharpoonup^*} \mu, \\
\M_1(\mu_N) & \underset{N \rightarrow +\infty}{\longrightarrow} \M_1(\mu). 
\end{aligned}
\right.
\end{equation*}
\end{Def}

We say that an absolutely continuous curve of measures $\mu(\cdot) \in \AC([0,T],\Pcal_c(\R^d))$ solves the \textit{continuity equation} driven by the velocity field $(t,x) \mapsto v(t,x)$ with initial condition $\mu^0 \in \Pcal_c(\R^d)$, which writes
\begin{equation}
\label{eq:ContinuityEquations}
\left\{
\begin{aligned}
& \partial_t \mu(t) + \Div \big( v(t)\mu(t) \big) = 0, \\
& \mu(0) = \mu^0, 
\end{aligned}
\right.
\end{equation}
where ``$\Div$'' stands for the distributional divergence, provided that $\mu(0) = \mu^0$ and
\begin{equation}
\label{eq:ContinuityEquationsDistrib}
\INTSeg{\INTDom{\big( \partial_t \phi(t,x) + \langle \nabla_x \phi(t,x) , v(t,x) \rangle \big)}{\R^d}{\mu(t)(x)}}{t}{0}{T}= 0,
\end{equation}
for any $\phi \in C^{\infty}_c([0,T] \times \R^d)$.  

\begin{taggedhyp}{\textbn{(C1)}}
\label{hyp:C1}
The velocity field $v(\cdot,\cdot)$ is a \textit{Carathéodory vector field}, i.e. the map $t \mapsto v(t,x)$ is $\Lcal^1$-measurable for all $x \in \R^d$ and the map $x \mapsto v(t,x)$ is continuous for $\Lcal^1$-almost every $t \in [0,T]$. Moreover, there exists $m \in L^1([0,T],\R_+)$ such that
\begin{equation}
\label{eq:SubLin}
|v(t,x)| \leq m(t) \big( 1 + |x| \big), 
\end{equation}
for $\Lcal^1$-almost every $t \in [0,T]$ and all $x \in \R^d$.
\end{taggedhyp}

\begin{taggedhyp}{\textbn{(C2)}} 
\label{hyp:C2}
For any compact set $K \subset \R^d$, there exists $l_K \in L^1([0,T],\R_+)$ such that
\begin{equation}
\label{eq:LipBound}
\Lip(v(t,\cdot) \, ; K) \leq l_K(t),
\end{equation}
for $\Lcal^1$-almost every $t \in [0,T]$.
\end{taggedhyp}

We now recall a classical well-posedness result for continuity equations (see e.g. \cite{AmbrosioC2014}).

\begin{thm}
Let $r >0$, $\mu^0 \in \Pcal(B(0,r))$ and suppose that $(t,x) \mapsto v(t,x)$ satisfies \ref{hyp:C1}. Then, there exists a solution $\mu(\cdot)$ of \eqref{eq:ContinuityEquations} starting from $\mu^0$. Moreover, there exist $R_r >0$ and $m_r \in L^1([0,T],\R_+)$ depending only on $r,m$ such that for each such solution, it holds 
\begin{equation}
\label{eq:Unif_SuppLip}
\supp(\mu(t)) \subset B(0,R_r),~~ W_1(\mu(t),\mu(s)) \leq \mathsmaller{\INTSeg{m_r(\tau)}{\tau}{s}{t}},
\end{equation}
for all times $0 \leq s \leq t \leq T$. If $(t,x) \mapsto v(t,x)$ also satisfies \ref{hyp:C2}, then the curve $\mu(\cdot)$ is unique.
\end{thm}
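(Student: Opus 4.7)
The plan is to build a solution by approximating $v$ with Lipschitz fields, propagate $\mu^0$ via the characteristic flow, and then obtain the uniform support and modulus estimates from the sublinear bound \eqref{eq:SubLin}. The closing claims (valid for every solution) will come from the superposition principle, which reduces matters to ODE-level estimates.

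First, I would regularise: take a smooth spatial mollifier $\rho_n$ and set $v_n(t,\cdot) := v(t,\cdot) \ast \rho_n$. Each $v_n$ is Carathéodory in $t$, globally Lipschitz in $x$ on compacts, and satisfies \eqref{eq:SubLin} with $m$ replaced by a slightly larger $L^1$ weight $\tilde m$ independent of $n$. Classical Carathéodory theory yields a unique flow $\Phi^n_t$ for $\dot x = v_n(t,x)$, and Gronwall applied to $|\Phi^n_t(x)| \leq |x| + \int_0^t \tilde m(\tau)(1+|\Phi^n_\tau(x)|)\,\mathrm{d}\tau$ gives, for $x\in B(0,r)$, the uniform bound $|\Phi^n_t(x)| \leq R_r := (r+\|\tilde m\|_{L^1})\exp(\|\tilde m\|_{L^1})$. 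Define $\mu_n(t) := (\Phi^n_t)_\# \mu^0 \in \Pcal(B(0,R_r))$. For $s\le t$, the plan $(\Phi^n_s,\Phi^n_t)_\#\mu^0$ is admissible in $\Gamma(\mu_n(s),\mu_n(t))$, which gives
\begin{equation*}
W_1(\mu_n(s),\mu_n(t)) \leq \INTDom{|\Phi^n_t(x) - \Phi^n_s(x)|}{B(0,r)}{\mu^0(x)} \leq \INTSeg{\tilde m(\tau)(1+R_r)}{\tau}{s}{t},
\end{equation*}
so with $m_r := \tilde m\,(1+R_r) \in L^1([0,T],\R_+)$ the curves $\mu_n(\cdot)$ are equicontinuous in $W_1$ and take values in the $W_1$-compact set $\Pcal(B(0,R_r))$. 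Ascoli--Arzelà extracts a subsequence with $W_1(\mu_n(t),\mu(t))\to 0$ uniformly in $t$, and since $v_n\to v$ locally uniformly on Carathéodory points, passing to the limit in the distributional formulation \eqref{eq:ContinuityEquationsDistrib} (test functions are compactly supported, and all measures sit in $B(0,R_r)$) produces a solution $\mu(\cdot)$ of \eqref{eq:ContinuityEquations} inheriting both estimates in \eqref{eq:Unif_SuppLip}.

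For the claim that the bounds \eqref{eq:Unif_SuppLip} hold for \emph{every} solution, I invoke the superposition principle \cite{AmbrosioC2014}: any $\mu(\cdot)\in\AC([0,T],\Pcal_c(\R^d))$ solving \eqref{eq:ContinuityEquations} lifts to a measure $\eta \in \Pcal(C^0([0,T],\R^d))$ concentrated on absolutely continuous curves $\gamma$ satisfying $\dot\gamma(t) = v(t,\gamma(t))$ for a.e. $t$, with $(e_t)_\#\eta = \mu(t)$. Applying Gronwall curve-by-curve on $\supp(\eta)$ yields $|\gamma(t)|\le R_r$ whenever $|\gamma(0)|\le r$, hence $\supp(\mu(t))\subset B(0,R_r)$. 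Using the plan $(e_s,e_t)_\#\eta$ in the definition of $W_1$ and bounding pathwise $|\gamma(t)-\gamma(s)|\le\int_s^t |v(\tau,\gamma(\tau))|\,\mathrm{d}\tau \le \int_s^t m_r(\tau)\,\mathrm{d}\tau$ gives the stated modulus.

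Finally, under \ref{hyp:C2} the field $v$ is locally Lipschitz uniformly in a neighbourhood of $\bigcup_{t}\supp(\mu(t)) \subset B(0,R_r)$, so the characteristic ODE has a unique Carathéodory solution from each initial point. The representation $\eta$ must then be $\eta = (x\mapsto\Phi_\cdot(x))_\#\mu^0$, forcing $\mu(t)=(\Phi_t)_\#\mu^0$; two solutions $\mu_1(\cdot),\mu_2(\cdot)$ sharing $\mu^0$ must therefore coincide. The main obstacle I anticipate is the passage to the limit in \eqref{eq:ContinuityEquationsDistrib}: one needs to justify $\int\langle\nabla\phi, v_n(t,\cdot)\rangle\,\mathrm{d}\mu_n(t)\to \int\langle\nabla\phi,v(t,\cdot)\rangle\,\mathrm{d}\mu(t)$ despite $v$ being merely continuous in $x$, which is handled by splitting into $v_n-v$ (controlled uniformly on $B(0,R_r)$ as $n\to\infty$ by continuity of $v$ and properties of mollification) and $v$ (handled by narrow convergence of $\mu_n(t)$ to $\mu(t)$ against the bounded continuous integrand $\langle\nabla\phi,v(t,\cdot)\rangle$), followed by dominated convergence in $t$ thanks to the uniform bound $|v|\leq m(t)(1+R_r)$.
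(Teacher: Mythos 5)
The paper does not actually prove this theorem: it is recalled as a classical well-posedness result with a pointer to \cite{AmbrosioC2014}, so there is no in-paper argument to compare against. Your reconstruction is correct and follows the standard route of that reference: existence by mollifying $v$ in space, pushing $\mu^0$ forward along the regularised flows, extracting a uniform limit via Ascoli--Arzel\`a, and passing to the limit in \eqref{eq:ContinuityEquationsDistrib}; then the a priori bounds \eqref{eq:Unif_SuppLip} for \emph{every} solution, and uniqueness under \ref{hyp:C2}, via the superposition principle. Two points deserve to be made explicit. First, the superposition principle is applicable here because \eqref{eq:SubLin} gives
\begin{equation*}
\int_0^T \int_{\R^d} \frac{|v(t,x)|}{1+|x|} \, \mathrm{d}\mu(t)(x) \, \mathrm{d}t \leq \|m\|_{L^1([0,T])} < +\infty
\end{equation*}
without any a priori bound on the supports, which is exactly what lets you run Gronwall curve-by-curve \emph{before} knowing that $\supp(\mu(t))$ is uniformly bounded. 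Second, in the uniqueness step you should invoke the disintegration $\eta = \int \eta_x \, \mathrm{d}\mu^0(x)$ over the initial point and conclude from pointwise ODE uniqueness that $\eta_x$ is the Dirac mass concentrated on the unique characteristic issued from $x$; this uniqueness holds because all relevant characteristics remain in $K = B(0,R_r)$, where the bound $l_K$ of \ref{hyp:C2} applies. Finally, the constants $R_r$, $m_r$ you produce in the construction (with $\tilde m$) and in the superposition step (with $m$) should be reconciled by taking the larger of the two, but both depend only on $r$ and $m$, as the statement requires.
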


Let $(\Scal,d_{\Scal})$ be a complete separable metric space and $(X,\Norm{\cdot}_X)$ be a separable Banach space. Given a subset $B \subset X$, we denote by $\co(B)$ its \textit{closed convex hull}. We say that a \textit{set-valued map} $\F : \Scal \rightrightarrows X$ has \textit{closed values} if $\F(s) \subset X$ is a closed set for all $s \in \Scal$. Furthermore, we say that $\F : [0,T] \rightrightarrows \Scal$ is $\Lcal^1$-measurable if the sets
\begin{equation*}
\F^{-1}(\Ocal) := \big\{ t \in [0,T] ~\text{s.t.}~ \F(t) \cap \Ocal \neq \emptyset \big\},
\end{equation*}
are $\Lcal^1$-measurable for any open subset $\Ocal \subset \Scal$.  

\begin{thm}
\label{thm:Selection}
Suppose that $\F : [0,T] \rightrightarrows \Scal$ is measurable with closed values. Then, there exists a \textit{measurable selection} $f : [0,T] \rightarrow \Scal$ in $\F(\cdot)$, i.e. $f(t) \in \F(t)$ for $\Lcal^1$-almost every $t \in [0,T]$.
\end{thm}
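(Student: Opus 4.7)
The plan is to construct $f$ as the uniform limit of a sequence $(f_n)_{n \geq 1}$ of countably-valued measurable maps from $[0,T]$ into $\Scal$ satisfying the coupled estimates
\[
d_\Scal(f_n(t), \F(t)) < 2^{-n} \qquad \text{and} \qquad d_\Scal(f_{n+1}(t), f_n(t)) < 2^{-n+1}
\]
for all $t \in [0,T]$, following the classical Kuratowski--Ryll-Nardzewski selection scheme tailored to the complete separable metric space $(\Scal, d_\Scal)$. Separability provides a countable dense sequence $\{s_k\}_{k \in \N}$ to serve as the shared range of every $f_n$, and completeness will eventually deliver $f$ from the summable Cauchy bound. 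I implicitly assume $\F(t) \neq \emptyset$ almost everywhere, which is necessary for an a.e. selection to exist and is harmless since $\{t \, : \, \F(t) \neq \emptyset\} = \F^{-1}(\Scal)$ is measurable.

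For the base case I would set $A^0_k := \F^{-1}(\{y \in \Scal \, : \, d_\Scal(s_k, y) < 2^{-1}\})$, which is $\Lcal^1$-measurable by the definition of measurability of $\F$ recalled just before the statement, and whose union exhausts $[0,T]$ by density. Disjointifying via $B^0_k := A^0_k \setminus \bigcup_{j < k} A^0_j$ and setting $f_1(t) := s_k$ on $B^0_k$ produces the first approximant. For the inductive step, given $f_n$, I would introduce
\[
A^n_k := \big\{ t \in [0,T] \, : \, d_\Scal(s_k, \F(t)) < 2^{-n-1} \text{ and } d_\Scal(s_k, f_n(t)) < 2^{-n+1} \big\},
\]
which is measurable by the same argument combined with the countable-valuedness of $f_n$. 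The cover property $[0,T] = \bigcup_k A^n_k$ follows by combining the inductive bound $d_\Scal(f_n(t), \F(t)) < 2^{-n}$ with density: pick $y \in \F(t)$ with $d_\Scal(y, f_n(t)) < 2^{-n}$ and $s_k$ with $d_\Scal(s_k, y) < 2^{-n-1}$, so that $d_\Scal(s_k, f_n(t)) < 3 \cdot 2^{-n-1} < 2^{-n+1}$ by the triangle inequality. Disjointifying and setting $f_{n+1} := s_k$ on the $k$-th piece completes the induction.

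Once the sequence is built, the bound on $d_\Scal(f_{n+1}(t), f_n(t))$ is summable in $n$, so $(f_n)$ is uniformly Cauchy and converges uniformly by completeness of $\Scal$ to a measurable limit $f : [0,T] \to \Scal$. Passing to the limit in $d_\Scal(f_n(t), \F(t)) < 2^{-n}$ and invoking the closedness of each $\F(t)$ then yields $f(t) \in \F(t)$ for every $t \in [0,T]$. The main delicate point is the measurability of the sets $A^n_k$, which rests on the fact that $t \mapsto d_\Scal(s_k, \F(t))$ has measurable sub-level sets; this is precisely the hypothesis on $\F$ recalled above, since the ball $\{y \in \Scal \, : \, d_\Scal(s_k, y) < r\}$ is open. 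Once this observation is made, the entire proof reduces to careful bookkeeping on dyadic scales.
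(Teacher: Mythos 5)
The paper does not prove Theorem~\ref{thm:Selection} at all: it is recalled in the preliminaries as the classical Kuratowski--Ryll-Nardzewski selection theorem, with \cite{Aubin1990} as the implicit reference. Your argument is precisely the canonical proof of that theorem and it is correct: the sets $A^n_k$ are measurable because $\{t : d_\Scal(s_k,\F(t)) < r\} = \F^{-1}(\{y : d_\Scal(s_k,y)<r\})$ is the preimage of an open ball (exactly the notion of measurability the paper adopts) and because $f_n$ is countably valued; the covering, disjointification, summable Cauchy bound, and the final passage to the limit using completeness of $\Scal$, continuity of $y \mapsto d_\Scal(y,\F(t))$, and closedness of the values are all in order. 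Your remark that nonemptiness of the values must be assumed (at least almost everywhere) is a genuine, if minor, point: the statement as printed omits it, and without it no selection can exist.
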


%%%%%%%%%%%%%%%%%%%%%%%%%%%%%%%%%%%%%%%%%%%%%%%%%%%%%%%%%%%%%%%%%%%%%%%%%%%%%%%%%%%%%%%
% 								    NEW SECTION AHEAD                                 %
%%%%%%%%%%%%%%%%%%%%%%%%%%%%%%%%%%%%%%%%%%%%%%%%%%%%%%%%%%%%%%%%%%%%%%%%%%%%%%%%%%%%%%%

\section{Differential inclusions in Wasserstein spaces}
\label{section:DiffInc}

In this section, we propose a functional approach to differential inclusions in Wasserstein spaces and state the compactness theorem and the Relaxation theorem.

\begin{Def}
\label{def:WassInc}
Given $V : [0,T] \times \Pcal_c(\R^d) \rightrightarrows C^0(\R^d,\R^d)$, a curve of measures $\mu(\cdot)$ solves the \textit{differential inclusion}
\begin{equation}
\label{eq:WassInc}
\left\{
\begin{aligned}
& \partial_t \mu(t) \in - \Div \big( V(t,\mu(t)) \mu(t) \big), \\
& \mu(0) = \mu^0, 
\end{aligned}
\right.
\end{equation}
if there exists a measurable selection $t \in [0,T] \mapsto v(t) \in V(t,\mu(t))$ such that $\mu(\cdot)$ solves \eqref{eq:ContinuityEquations}. The couple $(\mu(\cdot),v(\cdot))$ is then called a \textit{trajectory-selection} pair for \eqref{eq:WassInc}.
\end{Def}

\begin{Def}
For any compact set $K \subset \R^d$, define
\begin{equation*}
V_K(t,\mu) := \big\{ v_{|K} ~\text{s.t.}~ v \in V(t,\mu) \big\} \subset C^0(K,\R^d), 
\end{equation*}
for any $(t,\mu)$, where $v_{|K}$ is the restriction of $v$ to $K$.
\end{Def}

\begin{taggedhyp}{\textbn{(D)}} 
\label{hyp:D}
Suppose that for any compact subset $K \subset \R^d$, the following holds.  
\begin{enumerate}
\item[$(i)$] The map $t \in [0,T] \rightrightarrows V_K(t,\mu)$ is measurable with closed non-empty images for all $\mu \in \Pcal_c(\R^d)$.
\item[$(ii)$] There exists $m \in L^1([0,T],\R_+)$ such that for $\Lcal^1$-almost every $t \in [0,T]$ and all $\mu \in \Pcal_c(\R^d)$, any element $v \in V(t,\mu)$ satisfies for all $x \in \R^d$
\begin{equation*}
|v(x)| \leq m(t) \big( 1 + |x| + \M_1(\mu) \big). 
\end{equation*}
\item[$(iii)$] There exists $l_K \in L^1([0,T],\R_+)$ such that for $\Lcal^1$-almost every $t \in [0,T]$ and all $\mu \in \Pcal(K)$, any element $v \in V(t,\mu)$ satisfies 
\begin{equation*}
\Lip(v \, ; K) \leq l_K(t). 
\end{equation*}
\item[$(iv)$] There exists $L_K \in L^1([0,T],\R_+)$ such that for $\Lcal^1$-almost every $t \in [0,T]$ and all $\mu,\nu \in \Pcal(K)$ it holds
\begin{equation*}
V_K(t,\nu) \subset V_K(t,\mu) + L_K(t)W_1(\mu,\nu) \B_{C^0(K,\R^d)},
\end{equation*}
where $\B_{C^0(K,\R^d)}$ denotes the closed unit ball in $(C^0(K,\R^d),\NormC{\cdot}{0}{K,\R^d})$.
\end{enumerate}
\end{taggedhyp}

The following existence result can be proven under hypotheses \ref{hyp:D} by an iterative scheme in the spirit e.g. of \cite[Chapter 2.3.13]{Vinter}. The proof of this result is fairly lengthy and technical and will appear elsewhere. 

\begin{thm}
\label{thm:Existence}
Let $V : [0,T] \times \Pcal_c(\R^d) \rightrightarrows C^0(\R^d,\R^d)$  be verifying \ref{hyp:D}. Then for any $\mu^0 \in \Pcal_c(\R^d)$, there exists a solution $\mu(\cdot)$ of \eqref{eq:WassInc}. Moreover for every $r > 0$, there exist $R_r  > 0$ and $m_r \in L^1([0,T],\R_+)$ such that any solution of \eqref{eq:WassInc} with $\mu^0 \in \Pcal(B(0,r))$ satisfies \eqref{eq:Unif_SuppLip}.
\end{thm}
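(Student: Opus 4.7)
The plan is to follow an Euler-type time-discretisation scheme in the spirit of \cite[Chapter 2.3.13]{Vinter}, adapted to the Wasserstein setting. Before building approximate solutions, I would first derive the a priori bounds \eqref{eq:Unif_SuppLip}, since they feed back into the construction. Fix $r > 0$ and $\mu^0 \in \Pcal(B(0,r))$. For any putative trajectory-selection pair $(\mu(\cdot),v(\cdot))$ of \eqref{eq:WassInc}, item $(ii)$ of \ref{hyp:D} together with $\tderv{}{t} \M_1(\mu(t)) \leq \int_{\R^d} |v(t,x)| d\mu(t)(x)$ and Gronwall's lemma yields a uniform bound on $\M_1(\mu(t))$ depending only on $r$, $T$ and $m(\cdot)$. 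Substituting this bound back into $(ii)$ and propagating it along the characteristic flow of $v$ via one more application of Gronwall produces $\supp(\mu(t)) \subset B(0,R_r)$ for some $R_r > 0$. The Wasserstein time-Lipschitz estimate in \eqref{eq:Unif_SuppLip} for a suitable $m_r \in L^1([0,T],\R_+)$ then follows from $W_1(\mu(t),\mu(s)) \leq \int_s^t \int_{\R^d} |v(\tau,x)| d\mu(\tau)(x) d\tau$.

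For the existence of at least one solution, set $K := B(0,R_r)$ and construct a sequence $(\mu_n(\cdot))$ of approximate trajectories as follows. For each $n \geq 1$, put $t_k^n := kT/n$ for $k = 0, \ldots, n$, set $\mu_n(0) := \mu^0$, and proceed inductively over $k$. Once $\mu_n(t_k^n)$ has been obtained, item $(i)$ of \ref{hyp:D} ensures that $t \rightrightarrows V_K(t, \mu_n(t_k^n))$ is $\Lcal^1$-measurable with closed non-empty values in $C^0(K, \R^d)$, so Theorem \ref{thm:Selection} yields a measurable selection $v_n(t, \cdot) \in V(t, \mu_n(t_k^n))$ on $[t_k^n, t_{k+1}^n]$. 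Item $(iii)$ ensures that $v_n$ satisfies \ref{hyp:C1}-\ref{hyp:C2} on $K$, so the unique solution of \eqref{eq:ContinuityEquations} driven by $v_n$ starting from $\mu_n(t_k^n)$ is well-defined and extends $\mu_n(\cdot)$ to the next subinterval. Applying the a priori bounds piece by piece shows that all $\mu_n(\cdot)$ remain supported in $K$ and are uniformly $m_r$-Lipschitz in $W_1$, so by the Ascoli-Arzelà theorem in $C^0([0,T], (\Pcal(K), W_1))$ a subsequence converges uniformly to some $\mu(\cdot) \in \AC([0,T], \Pcal(K))$ satisfying \eqref{eq:Unif_SuppLip}.

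To identify the limit velocity, I would invoke $(iv)$: for almost every $t \in [t_k^n, t_{k+1}^n]$, there exists $\hat v_n(t, \cdot) \in V(t, \mu_n(t))$ with
\begin{equation*}
\NormC{v_n(t,\cdot) - \hat v_n(t,\cdot)}{0}{K,\R^d} \leq L_K(t) \, W_1 \big( \mu_n(t_k^n), \mu_n(t) \big) \leq L_K(t) \INTSeg{m_r(\tau)}{\tau}{t_k^n}{t},
\end{equation*}
which tends to zero uniformly in $t$ as $n \to \infty$. Since $\{\hat v_n(t,\cdot)\}$ is uniformly bounded and equi-Lipschitz on $K$ by $(ii)$-$(iii)$, a pointwise-in-$t$ application of Ascoli-Arzelà combined with a measurable-selection argument à la Aumann yields a measurable $v : [0,T] \to C^0(K, \R^d)$ such that, along a subsequence, $\hat v_n(t, \cdot) \to v(t, \cdot)$ in $C^0(K, \R^d)$ for $\Lcal^1$-a.e. $t$. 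Using $(iv)$ once more with $W_1(\mu_n(t), \mu(t)) \to 0$ and the closedness of the images of $V_K(t, \cdot)$ forces $v(t, \cdot) \in V(t, \mu(t))$, and passing to the limit in the distributional formulation \eqref{eq:ContinuityEquationsDistrib} by dominated convergence identifies $(\mu(\cdot), v(\cdot))$ as a trajectory-selection pair for \eqref{eq:WassInc}.

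The main obstacle I anticipate is the work of the third paragraph, namely the simultaneous extraction of a measurable limit $v(\cdot)$ and the verification that $v(t,\cdot) \in V(t, \mu(t))$ almost everywhere. The difficulty is that the selections live in the infinite-dimensional space $C^0(K, \R^d)$, that the target set depends on the limit object $\mu(t)$, and that measurability must be preserved along the extraction. Hypothesis $(iv)$ is precisely the structural ingredient that bypasses these issues, converting the nonlinear dependence of $V$ on $\mu$ into a quantitative Hausdorff-type estimate and reducing the passage to the limit in the selection to a controlled perturbation problem at each time.
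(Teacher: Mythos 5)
The paper does not actually contain a proof of Theorem \ref{thm:Existence}: it only announces that the result follows ``by an iterative scheme in the spirit of \cite[Chapter 2.3.13]{Vinter}'' --- i.e.\ a Filippov-type successive-approximation argument --- and defers the details. Your a priori estimates (first paragraph) and the construction of the Euler approximations (second paragraph) are sound and consistent with that programme. The problem is the third paragraph, where the scheme breaks down, and it breaks down precisely because Theorem \ref{thm:Existence} makes \emph{no convexity assumption} on the values of $V$. Your extraction of a limit velocity is a ``pointwise-in-$t$'' Ascoli--Arzel\`a argument: for each fixed $t$ the family $\{\hat v_n(t,\cdot)\}_n$ is indeed precompact in $C^0(K,\R^d)$, but the convergent subsequence depends on $t$, and no measurable-selection device produces a \emph{single} subsequence along which $\hat v_n(t,\cdot)\to v(t,\cdot)$ for $\Lcal^1$-a.e.\ $t$. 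Without a common subsequence you cannot pass to the limit in the time-integrated weak formulation \eqref{eq:ContinuityEquationsDistrib}, so there is no reason why the $v(\cdot)$ you construct should be the velocity field that actually generates the limit curve $\mu(\cdot)$. If instead you repair this by the natural functional-analytic route --- uniform $L^1([0,T],W^{1,p}(K,\R^d))$ bounds, Dunford--Pettis, weak convergence, exactly as in the proof of Theorem \ref{thm:Compactness} --- then the weak limit $v^*(t)$ is only guaranteed to lie in $\co V_K(t,\mu(t))$: recovering membership in $V_K(t,\mu(t))$ requires Mazur's lemma and hence convex values. In other words, the Euler scheme as you set it up proves existence for the \emph{relaxed} inclusion \eqref{eq:RelaxedInc_Theorem}, not for \eqref{eq:WassInc}; this is the same phenomenon as in finite dimensions, and it is why hypothesis \ref{hyp:D}-$(iv)$ alone does not ``bypass'' the issue --- it controls $\dist(v_n(t),V_K(t,\mu(t)))$ but gives no strong convergence of the $v_n(t)$ themselves.

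The fix is to replace the Euler discretisation by the successive-approximation scheme the paper alludes to. Start from $\mu_0(t)\equiv\mu^0$ and any measurable selection $v_0(t)\in V(t,\mu^0)$; given $(\mu_k(\cdot),v_k(\cdot))$, use \ref{hyp:D}-$(iv)$ together with Theorem \ref{thm:Selection} (applied to the closed-valued map $t\rightrightarrows V_K(t,\mu_k(t))\cap\big(v_{k-1}(t)+L_K(t)W_1(\mu_k(t),\mu_{k-1}(t))\B_{C^0(K,\R^d)}\big)$) to select $v_k(t)\in V_K(t,\mu_k(t))$ with $\NormC{v_k(t)-v_{k-1}(t)}{0}{K,\R^d}\leq L_K(t)\,W_1(\mu_k(t),\mu_{k-1}(t))$, and let $\mu_{k+1}(\cdot)$ be the (unique, by \ref{hyp:D}-$(iii)$) solution of \eqref{eq:ContinuityEquations} driven by $v_k$. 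A Gronwall estimate for the continuity equation bounds $W_1(\mu_{k+1}(t),\mu_k(t))$ by a constant times $\int_0^t\NormC{v_k(s)-v_{k-1}(s)}{0}{K,\R^d}\textnormal{d}s$, and iterating yields bounds of the type $\big(\int_0^tL_K\big)^k/k!$, so that $(v_k)$ is Cauchy in $L^1([0,T],C^0(K,\R^d))$ and $(\mu_k)$ is Cauchy in $C^0([0,T],\Pcal(K))$. The \emph{strong} convergence $v_k(t)\to v(t)$ in $C^0(K,\R^d)$ for a.e.\ $t$ along a subsequence, combined with \ref{hyp:D}-$(iv)$ and the closedness of the images in \ref{hyp:D}-$(i)$, then gives $v(t)\in V_K(t,\mu(t))$ without any convexity, and the passage to the limit in \eqref{eq:ContinuityEquationsDistrib} is immediate. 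Your a priori bounds from the first paragraph are exactly what is needed to keep all iterates supported in a common $K=B(0,R_r)$ and to obtain \eqref{eq:Unif_SuppLip}, so that part of your work carries over unchanged.
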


%%%%%%%%%%%%%%%%%%%%%%%%%%%%%%%%%%%%%%%%%%%%%%%%%%%%%%%%%%%%%%%%%%%%%%%%%%%%%%%%%%%%%%%

\subsection{Compactness of the solution set}

An essential property of \eqref{eq:WassInc} that we shall use in the sequel is the compactness of its solution set whenever $V : [0,T] \times \Pcal_c(\R^d) \rightrightarrows C^0(\R^d,\R^d)$ has convex values.

\begin{thm}
\label{thm:Compactness}
Suppose that $V : [0,T] \times \Pcal_c(\R^d) \rightrightarrows C^0(\R^d,\R^d)$ has \textit{convex values} and that it satisfies \ref{hyp:D}. Then, the set of all trajectories of \eqref{eq:WassInc} is compact in the topology of the uniform convergence. 
\end{thm}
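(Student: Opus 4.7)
The plan is to exploit the uniform bounds of Theorem~\ref{thm:Existence} to extract a $W_1$-uniformly convergent subsequence of trajectories, then to use hypotheses~\ref{hyp:D} together with the convexity of the images of $V$ to produce a limit selection $v(\cdot)$ satisfying $v(t) \in V(t,\mu(t))$ for almost every $t \in [0,T]$.

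Fix $r>0$ such that all initial data lie in $\Pcal(B(0,r))$, and let $(\mu_n(\cdot),v_n(\cdot))$ be a sequence of trajectory-selection pairs for \eqref{eq:WassInc}. By~\eqref{eq:Unif_SuppLip}, every $\mu_n(t)$ is supported in the compact set $K := B(0,R_r)$ and the family $\{\mu_n(\cdot)\}$ is equi-continuous in $W_1$ with modulus $\int_s^t m_r(\tau)\,d\tau$. Since $(\Pcal(K),W_1)$ is compact, an Arzel\`a--Ascoli argument yields a subsequence (not relabelled) and a limit curve $\mu(\cdot) \in C^0([0,T],\Pcal(K))$ with $\mu_n \to \mu$ uniformly in $W_1$. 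For the velocities, \ref{hyp:D}$(ii)$--$(iii)$ imply that each $v_n(t,\cdot)|_K$ is bounded in $C^0(K,\R^d)$ by $m(t)(1+2R_r)$ and equi-Lipschitz with integrable constant $l_K(t)$.

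Viewing the $\R^d$-valued measures $E_n$ on $[0,T]\times K$ defined by $dE_n := v_n(t,x)\,d\mu_n(t)(x)\,dt$, the preceding bounds yield equi-bounded total variation and tightness, so along a subsequence $E_n \rightharpoonup E$ narrowly. Passing to the limit in~\eqref{eq:ContinuityEquationsDistrib} gives $\partial_t \mu + \Div E = 0$. Disintegrating $E = E_t \otimes dt$ and using $|E_n(t)| \leq m(t)(1+2R_r)\,\mu_n(t)$ together with the uniform $W_1$-convergence of $\mu_n(t)$ to $\mu(t)$, one deduces $E_t \ll \mu(t)$ for almost every $t$, so that the Borel field $v(t,\cdot) := dE_t/d\mu(t)$ drives $\mu(\cdot)$ through~\eqref{eq:ContinuityEquations}.

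The delicate step is verifying $v(t)\in V(t,\mu(t))$ almost everywhere. From~\ref{hyp:D}$(iv)$ applied with $\mu_n(t),\mu(t)\in\Pcal(K)$, one has
\begin{equation*}
v_n(t)|_K \in V_K(t,\mu(t)) + L_K(t)\, W_1(\mu_n(t),\mu(t))\, \B_{C^0(K,\R^d)},
\end{equation*}
so the $v_n(t)|_K$ lie at asymptotically vanishing distance from the convex closed set $V_K(t,\mu(t))$. Applying Mazur's lemma in $L^1([0,T],C^0(K,\R^d))$ to suitable convex combinations of $\{v_n\}$, or equivalently an Aumann--Filippov-type closure argument based on the upper semicontinuity encoded by~\ref{hyp:D}$(iv)$ (in the spirit of standard closure results for differential inclusions, see e.g.~\cite{Aubin1990}), identifies the limit field $v(t,\cdot)$ as a measurable selection of $V_K(t,\mu(t))$, and hence by~\ref{hyp:D}$(i)$ of $V(t,\mu(t))$. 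This shows that $(\mu(\cdot),v(\cdot))$ is a trajectory-selection pair and concludes the proof. The main obstacle is to adapt the finite-dimensional Aumann--Filippov closure argument to the infinite-dimensional target space $C^0(K,\R^d)$; the convexity of the values together with the one-sided Lipschitz-type estimate~\ref{hyp:D}$(iv)$ is precisely what makes the convex-combination trick convert the narrow convergence of the $E_n$ into selection-level convergence ensuring membership in $V(t,\mu(t))$.
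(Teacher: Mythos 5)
Your overall skeleton matches the paper's: extract a $W_1$-uniformly convergent subsequence via the a priori bounds of Theorem \ref{thm:Existence}, pass to the limit in the continuity equation, then close the selection using \ref{hyp:D}-$(iv)$ and convexity. Your route for the middle step (narrow compactness of the vector measures $E_n = v_n\mu_n\otimes dt$ followed by a Radon--Nikodym disintegration of the limit) is a legitimate alternative to the paper's argument, which instead extracts a weak limit of the velocity fields themselves.

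But that difference is precisely where your final step breaks down. You invoke ``Mazur's lemma in $L^1([0,T],C^0(K,\R^d))$ applied to suitable convex combinations of $\{v_n\}$,'' yet you have not established weak convergence of the $v_n$ in any Banach space: narrow convergence of the momenta $E_n=v_n\mu_n\otimes dt$ is convergence of products against \emph{varying} measures and does not yield weak convergence of the $v_n$ as functions, nor does it identify $dE_t/d\mu(t)$ with a limit of (convex combinations of) the $v_n(t,\cdot)$ in $C^0(K,\R^d)$. Without such an identification, the estimate from \ref{hyp:D}-$(iv)$ --- which places $v_n(t)|_K$ near the closed convex set $V_K(t,\mu(t))$ --- says nothing about the density $v(t,\cdot)=dE_t/d\mu(t)$, which a priori is only a Borel, $\mu(t)$-a.e.\ defined field with no continuity, whereas membership in $V(t,\mu(t))$ requires a continuous function on $K$. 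Note also that bounded subsets of $C^0(K,\R^d)$ are not weakly sequentially compact, so the missing weak convergence cannot be obtained for free in that space. This is exactly the obstruction the paper's proof is built to circumvent: hypotheses \ref{hyp:D}-$(ii)$,$(iii)$ give a uniform bound $\|v_n(t,\cdot)\|_{W^{1,p}(K,\R^d)}\le c(t)$ with $c\in L^1$, the Dunford--Pettis theorem for Bochner integrable maps then furnishes a \emph{weakly convergent} subsequence $v_n\rightharpoonup v^*$ in $L^1([0,T],W^{1,p}(K,\R^d))$, Morrey's embedding (with $p>d$) converts this into convergence of the integrals against the fixed limit measure $\mu^*(t)$ as in \eqref{eq:IntegralConvVel}, and only then does Mazur's lemma apply --- to the convex, strongly closed (hence weakly closed) set $\Vcal_K$ of $L^1$-selections of $t\mapsto V(t,\mu^*(t))$, after first replacing $v_n$ by nearby selections $\tilde v_n(t)\in V(t,\mu^*(t))$ via \ref{hyp:D}-$(iv)$. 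To repair your argument you would either have to reproduce this weak-compactness step for the velocity fields (at which point the vector-measure detour becomes unnecessary), or prove a genuinely new closure lemma identifying the disintegrated density with a continuous selection; as written, that step is a gap.
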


\begin{proof}
Let $r>0$ be such that $\mu^0 \in \Pcal(B(0,r))$ and $(\mu_N(\cdot),v_N(\cdot))$ be a sequence of trajectory-selection pairs for \eqref{eq:WassInc}. By Theorem \ref{thm:Existence}, there exists $R_r >0$ and $m_r \in L^1([0,T],\R_+)$ such that $\mu_N(\cdot)$ satisfies \eqref{eq:Unif_SuppLip} for all $N \geq 1$. Whence by the Ascoli-Arzelà Theorem, there exists a curve $\mu^*(\cdot) \in \AC([0,T],\Pcal_c(\R^d))$ such that
\begin{equation}
\label{eq:UnifConvMeas}
\sup_{t \in [0,T]} W_1(\mu_N(t),\mu^*(t)) \underset{N \rightarrow +\infty}{\longrightarrow} 0,
\end{equation}
along an unrelabelled subsequence. 

Define the closed ball $K := B(0,R_r)$. By \ref{hyp:D}-$(ii),(iii)$ and \eqref{eq:Unif_SuppLip}, there exists $c \in L^1([0,T],\R_+)$ such that 
\begin{equation*}
\Norm{v_N(t,\cdot)}_{W^{1,p}(K,\R^d)} \leq c(t), 
\end{equation*}
for $\Lcal^1$-almost every $t \in [0,T]$ and all $p \in (1,+\infty)$. Thus by the Dunford-Pettis theorem (see e.g. \cite[Thm. 1.38]{AmbrosioFuscoPallara}) applied to Bochner integrable maps (see e.g. \cite{DiestelUhl}), the sequence $(v_N(\cdot))$ admits a weakly-converging subsequence towards some $v^*(\cdot)$ in $L^1([0,T],W^{1,p}(K,\R^d))$. By choosing $p >d$ and invoking Morrey's embedding (see e.g. \cite[Thm. 9.12]{Brezis}), it holds for any $\phi \in C^{\infty}_c(\R^d)$
\begin{equation}
\label{eq:IntegralConvVel}
\INTSeg{\INTDom{\langle v^*(t,x) - v_N(t,x) , \nabla \phi(x) \rangle}{\R^d}{\mu^*(t)}}{t}{0}{T} \underset{N \rightarrow +\infty}{\longrightarrow} 0,
\end{equation}
along an unrelabelled subsequence. Plugging \eqref{eq:UnifConvMeas}-\eqref{eq:IntegralConvVel} into \eqref{eq:ContinuityEquationsDistrib} in turn yields that $(\mu^*(\cdot),v^*(\cdot))$ solves \eqref{eq:ContinuityEquations}. 

We now prove that $v^*(t) \in V(t,\mu^*(t))$ for $\Lcal^1$-almost every $t \in [0,T]$. By hypothesis \ref{hyp:D}-$(iv)$ and \eqref{eq:UnifConvMeas}, there exists $(\tilde{v}_N(\cdot))$ such that 
\begin{equation}
\label{eq:TildeV}
\NormC{v_N(t) - \tilde{v}_N(t)}{0}{K,\R^d} \underset{N \rightarrow +\infty}{\longrightarrow} 0,~~ \tilde{v}_N(t) \in V(t,\mu^*(t)),
\end{equation}
for $\Lcal^1$-almost every $t \in [0,T]$. By repeating the same argument as before, $(\tilde{v}_N(\cdot))$ admits a weakly-converging subsequence in $L^1([0,T],W^{1,p}(K,\R^d))$. Moreover, observe that $\tilde{v}_N(\cdot) \in \Vcal_K$ where 
\begin{equation*}
\Vcal_K = \big\{ v \in L^1([0,T],C^0(K,\R^d)) ~\text{s.t.}~ v(t) \in V(t,\mu^*(t)) \big\}.
\end{equation*}
It can be shown (see e.g. \cite{MFOC}) that $\Vcal_K$ is closed in the strong $L^1([0,T],W^{1,p}(K,\R^d))$-topology. Also, $\Vcal_K$ is convex since $V(\cdot,\cdot)$ has convex values, so that it is weakly closed by Mazur's Lemma (see e.g. \cite[Thm. 3.7]{Brezis}). Thus, by \eqref{eq:TildeV}, we conclude that $v^*(t) \in V(t,\mu^*(t))$ for $\Lcal^1$-almost every $t \in [0,T]$, which proves our claim. 
\end{proof}

%%%%%%%%%%%%%%%%%%%%%%%%%%%%%%%%%%%%%%%%%%%%%%%%%%%%%%%%%%%%%%%%%%%%%%%%%%%%%%%%%%%%%%%

\subsection{Relaxation theorem and value functions}

In the case where the values of $V : [0,T] \times \Pcal_c(\R^d) \rightrightarrows C^0(\R^d,\R^d)$ are not convex, the closure of the set of trajectories can be characterised by the \textit{Relaxation theorem}. 

\begin{Def}
We define the \textnormal{closed convex hull} of a set $V \subset C^0(\R^d,\R^d)$ as
\begin{equation}
\label{eq:ConvHull}
\co V := \{ v ~\text{s.t.}~ v_{\vert K} \in \co V_K ~ \forall K \subset \R^d \, \text{compact} \big\}, 
\end{equation}
where $\co V_K$ is the closed convex-hull of $V_K(\mu)$ in the Banach space $(C^0(K,\R^d),\Norm{\cdot}_{C^0})$. 

\end{Def}

\begin{thm}
\label{thm:RelaxationWass}
Suppose that $V : [0,T] \times \Pcal_c(\R^d) \rightrightarrows C^0(\R^d,\R^d)$ satisfies \ref{hyp:D}. Then for any $\delta > 0$, $\mu^0 \in \Pcal_c(\R^d)$ and any solution $\mu(\cdot) \in \AC([0,T],\Pcal_c(\R^d))$ of 
\begin{equation}
\label{eq:RelaxedInc_Theorem}
\left\{
\begin{aligned}
& \partial_t \mu(t) \in - \Div \big( \co V(t,\mu(t)) \mu(t) \big), \\
& \mu(0) = \mu^0, 
\end{aligned}
\right.
\end{equation}
there exists a solution $\mu_{\delta}(\cdot) \in \AC([0,T],\Pcal_c(\R^d))$ of
\begin{equation}
\label{eq:Inc_Theorem}
\left\{
\begin{aligned}
& \partial_t \mu_{\delta}(t) \in - \Div \big( V(t,\mu_{\delta}(t)) \mu_{\delta}(t) \big), \\
& \mu_{\delta}(0) = \mu^0, 
\end{aligned}
\right.
\end{equation}
such that 
\begin{equation*}
\sup_{t \in [0,T]} W_1(\mu(t),\mu_{\delta}(t)) \leq \delta.
\end{equation*}
\end{thm}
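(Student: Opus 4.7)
The plan is to adapt the classical Wazewski relaxation theorem to the Wasserstein setting via a two-stage scheme: a chattering approximation of the convex selection $w(\cdot) \in \co V(\cdot, \mu(\cdot))$ driving $\mu(\cdot)$ by a non-convex one $g(\cdot) \in V(\cdot, \mu(\cdot))$, followed by a Filippov-type correction producing a genuine trajectory–selection pair for \eqref{eq:Inc_Theorem}. As a preliminary reduction, fix $r > 0$ with $\mu^0 \in \Pcal(B(0,r))$ and apply Theorem \ref{thm:Existence} to both $V$ and $\co V$ (which inherits \ref{hyp:D} from $V$, since closed convex hulls preserve Lipschitz bounds and one-sided inclusions under ball perturbations) to obtain a compact $K := B(0,R_r)$ containing the supports of every trajectory of either inclusion on $[0,T]$. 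All selections may then be identified with their restrictions to $K$.

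For the chattering step, I would construct, for any $\eta > 0$, a measurable selection $g(t) \in V(t, \mu(t))$ such that
$$
U(t, y) := \int_0^t \big( g(s, y) - w(s, y) \big) ds
$$
is uniformly small in the $C^0(K, \R^d)$-norm over $t \in [0,T]$. This is a standard partition-of-$[0,T]$ argument: on each subinterval of a sufficiently fine partition, the convex combination realizing $w$ is replaced by a time-dependent switch among selections of $V(\cdot, \mu(\cdot))$, with measurability of $g$ guaranteed by Theorem \ref{thm:Selection}. Letting $\hat\mu(\cdot)$ solve the continuity equation driven by $g(\cdot)$ with datum $\mu^0$, and writing $\mu(t) = (\Phi^w_t)_\# \mu^0$ and $\hat\mu(t) = (\Phi^g_t)_\# \mu^0$ for the respective Carathéodory flows, the decomposition
$$
\Phi^w_t(x) - \Phi^g_t(x) = \int_0^t (w-g)(s, \Phi^w_s(x)) ds + \int_0^t \big( g(s, \Phi^w_s(x)) - g(s, \Phi^g_s(x)) \big) ds
$$
lets me integrate the first summand by parts in time: the resulting boundary contribution $U(t, \Phi^w_t(x))$ is of size $\eta$, while the cross term can be absorbed by Gronwall against the second summand using \ref{hyp:D}-$(iii)$. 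This yields $\sup_t W_1(\mu(t), \hat\mu(t)) \leq C\eta$ for some $C = C(T, l_K, m)$.

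The Filippov correction then addresses the fact that $g(t) \in V(t, \mu(t))$ rather than $V(t, \hat\mu(t))$: applying \ref{hyp:D}-$(iv)$ and Theorem \ref{thm:Selection} to the measurable multifunction
$$
t \mapsto V_K(t, \hat\mu(t)) \cap \big( g(t) + L_K(t) C\eta \cdot \B_{C^0(K, \R^d)} \big)
$$
produces a measurable $v_\delta(t) \in V(t, \hat\mu(t))$ at $C^0(K, \R^d)$-distance at most $L_K(t) C \eta$ from $g(t)$. Iterating this correction in a Cauchy scheme and invoking the stability estimate built into Theorem \ref{thm:Existence} produces the desired trajectory $(\mu_\delta(\cdot), v_\delta(\cdot))$ of \eqref{eq:Inc_Theorem} with $\sup_t W_1(\hat\mu(t), \mu_\delta(t)) = O(\eta)$; choosing $\eta$ small enough that $W_1(\mu, \mu_\delta) \leq W_1(\mu, \hat\mu) + W_1(\hat\mu, \mu_\delta) \leq \delta$ concludes. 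The hard part is the integration-by-parts step in the chattering argument: although $U(t, \cdot)$ is small in $C^0$-norm by construction, its spatial gradient $\nabla_y U$ is only uniformly bounded (by $2\int_0^T l_K$, via \ref{hyp:D}-$(iii)$), so transferring temporal oscillation of the velocity into spatial closeness of the two flows through a careful duality manipulation is the qualitatively new feature compared to the classical $\R^d$-valued case.
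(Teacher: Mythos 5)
Your two-stage scheme --- an Aumann/chattering approximation yielding a trajectory $\hat\mu(\cdot)$ of the ``frozen'' inclusion $\partial_t \nu(t) \in -\Div\big(V(t,\mu(t))\nu(t)\big)$ that is $W_1$-close to $\mu(\cdot)$, followed by a Filippov-type correction exploiting \ref{hyp:D}-$(iv)$ and a concluding triangle inequality --- is exactly the strategy sketched in the paper, which likewise defers the full technical details to a separate publication. The one fragile point is your integration by parts: since $\nabla_y U$ is only bounded (not small), the boundary-term argument does not close by itself, and the standard repair --- still within the same approach --- is to estimate $\int_0^t (w-g)(s,\Phi^w_s(x))\,\textnormal{d}s$ piecewise on the partition, freezing the spatial argument at $\Phi^w_{t_i}(x)$ on each $[t_i,t_{i+1}]$ and absorbing the resulting error via \ref{hyp:D}-$(iii)$ together with the uniform time-modulus of continuity of the flow from \eqref{eq:Unif_SuppLip}.
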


\begin{proof}
The proof of this result cannot be fully detailed here due to its complexity and the lack of space, and will thus be published elsewhere. Nonetheless, we wish to sketch its main arguments. Let $t \in [0,T] \mapsto v(t) \in V(t,\mu(t))$ be a selection generating a solution $\mu(\cdot)$ of \eqref{eq:RelaxedInc_Theorem} in the sense of Definition \ref{def:WassInc}.

The first step of the proof is to build a curve of measures $\nu(\cdot)$ solution of
\begin{equation}
\label{eq:Intermediate_Inclusion}
\left\{
\begin{aligned}
& \partial_t \nu(t) \in - \Div \big( V(t,\mu(t)) \nu(t) \big), \\
& \nu(0) = \mu^0, 
\end{aligned}
\right.
\end{equation}
such that $W_1(\mu(t),\nu(t))$ is small. To do so, one chooses an adequate subdivision $\{ [t_i,t_{i+1}]\}_{i=0}^{N-1}$ of $[0,T]$ and applies Aumann's theorem (see e.g. \cite[Thm. 8.6.4]{Aubin1990}) to recover the existence of measurable selections $t \in [t_i,t_{i+1}] \mapsto v_i(t) \in V(t,\mu(t))$ whose integrals are sufficiently close to that of $t \in [t_i,t_{i+1}] \mapsto v(t) \in \co V(t,\mu(t))$. One can then show that the curve $\nu(\cdot)$ driven by $w(t) := \sum_{i=0}^{N-1} \mathds{1}_{[t_i,t_{i+1}]}(t) v_i(t)$ solves \eqref{eq:Intermediate_Inclusion} and is close to $\mu(\cdot)$.

The second step is to apply a version of Filippov's estimate (see e.g. \cite[Thm 2.3.13]{Vinter}) adaptated to Wasserstein spaces, which together with the Lipschitzianity \ref{hyp:D}-$(iv)$ of $\mu \mapsto V(t,\mu)$ allows to recover the existence of a curve $\mu_{\delta}(\cdot)$ solution of \eqref{eq:Inc_Theorem} such that $W_1(\mu_{\delta}(t),\nu(t))$ is small. The result then follows from the triangle inequality.
\end{proof}

The next proposition provides an application of the Relaxation theorem to investigate the value function of optimal control problems on differential inclusions.

\begin{prop}
Suppose that $V : [0,T] \times \Pcal_c(\R^d) \rightrightarrows C^0(\R^d,\R^d)$ satisfies \ref{hyp:D} and that $\varphi : \Pcal_c(\R^d) \rightarrow \R$ is continuous in the $W_1$-metric. Then, the \textnormal{value functions} $\Vcal,\Vcal_{\co} : [0,T] \times\Pcal_c(\R^d) \rightarrow \R$ defined respectively by 
\begin{equation*}
\Vcal \big( \tau , \mu_{\tau} \big) := \left\{ 
\begin{aligned}
\inf_{\mu(\cdot)} \, & \big[ \varphi(\mu(T)) \big] \\
\textnormal{s.t.} & 
\left\{ 
\begin{aligned}
& \partial_t \mu(t) \in - \Div \Big( V(t,\mu(t)) \mu(t) \Big), \\
& \mu(\tau) = \mu_{\tau}, 
\end{aligned}
\right.
\end{aligned}
\right.
\end{equation*}
and
\begin{equation*}
\Vcal_{\co} \big( \tau , \mu_{\tau} \big) := \left\{ 
\begin{aligned}
\inf_{\mu(\cdot)} \, & \big[ \varphi(\mu(T)) \big] \\
\textnormal{s.t.} & 
\left\{ 
\begin{aligned}
& \partial_t \mu(t) \in - \Div \Big( \co V(t,\mu(t)) \mu(t) \Big), \\
& \mu(\tau) = \mu_{\tau}, 
\end{aligned}
\right.
\end{aligned}
\right.
\end{equation*}
for every $(\tau,\mu_{\tau}) \in [0,T] \times \Pcal_c(\R^d)$ are equal, where $\co V(t,\mu)$ is defined as in \eqref{eq:ConvHull}.
\end{prop}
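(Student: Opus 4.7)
The plan is to prove the two inequalities $\Vcal_{\co} \leq \Vcal$ and $\Vcal \leq \Vcal_{\co}$ separately, the first being immediate from an inclusion of admissible sets and the second requiring the Relaxation Theorem \ref{thm:RelaxationWass}.

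For the easy inequality $\Vcal_{\co}(\tau,\mu_\tau) \leq \Vcal(\tau,\mu_\tau)$, I would simply observe that since $V(t,\mu) \subset \co V(t,\mu)$ pointwise, every trajectory-selection pair admissible in the problem defining $\Vcal(\tau,\mu_\tau)$ is automatically admissible for $\Vcal_{\co}(\tau,\mu_\tau)$. Taking the infimum over a larger class of competitors yields a smaller or equal value.

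For the reverse inequality, I would fix $\epsilon > 0$ and pick a near-optimal trajectory $\mu(\cdot) \in \AC([\tau,T],\Pcal_c(\R^d))$ for the relaxed problem, satisfying $\varphi(\mu(T)) \leq \Vcal_{\co}(\tau,\mu_\tau) + \epsilon$. Since $\varphi$ is continuous at $\mu(T)$ in the $W_1$-metric, for any $\eta > 0$ there exists $\delta > 0$ such that $W_1(\mu(T),\nu) \leq \delta$ implies $|\varphi(\mu(T)) - \varphi(\nu)| \leq \eta$. Now I apply Theorem \ref{thm:RelaxationWass} on the interval $[\tau,T]$, where hypotheses \ref{hyp:D} remain valid upon restriction of the data, to obtain a solution $\mu_{\delta}(\cdot)$ of the unrelaxed inclusion \eqref{eq:Inc_Theorem} with $\mu_{\delta}(\tau) = \mu_{\tau}$ and $\sup_{t \in [\tau,T]} W_1(\mu(t),\mu_{\delta}(t)) \leq \delta$. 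In particular $\varphi(\mu_{\delta}(T)) \leq \varphi(\mu(T)) + \eta \leq \Vcal_{\co}(\tau,\mu_\tau) + \epsilon + \eta$, so that $\Vcal(\tau,\mu_\tau) \leq \Vcal_{\co}(\tau,\mu_\tau) + \epsilon + \eta$. Letting $\eta, \epsilon \to 0^+$ concludes the argument.

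The only subtle point — and really the only place where a careful reader might pause — is the application of Theorem \ref{thm:RelaxationWass} on the sub-interval $[\tau,T]$ rather than $[0,T]$. This is not a genuine obstacle, since hypotheses \ref{hyp:D} are stated uniformly in $t$ and their restriction to $[\tau,T]$ yields data of the same type (one can equivalently rescale time affinely to return to an interval of the form $[0,T-\tau]$). Everything else reduces to the inclusion $V \subset \co V$ and the $W_1$-continuity of the terminal cost $\varphi$, both of which are used in the most direct way possible.
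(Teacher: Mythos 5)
Your proposal is correct and follows essentially the same route as the paper: the easy inequality from the inclusion $V \subset \co V$, and the reverse inequality by approximating a (near-)optimal relaxed trajectory via Theorem \ref{thm:RelaxationWass} and invoking the $W_1$-continuity of $\varphi$. The only cosmetic difference is that you fix a near-optimal competitor and use an $\epsilon$--$\eta$ argument where the paper approximates an arbitrary relaxed trajectory by a sequence and then takes the infimum; your explicit remark about restricting the data to $[\tau,T]$ is a point the paper passes over silently.
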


\begin{proof}
Let $(\tau,\mu_{\tau}) \in [0,T] \times \Pcal_c(\R^d)$ and $r_{\tau} > 0$ be such that $\mu_{\tau} \in \Pcal(B(0,r_{\tau}))$. Let $K_{\tau} := B(0,R_{r_{\tau}})$ be given by Theorem \ref{thm:Existence}, i.e. it is such that all the trajectories of the differential inclusions \eqref{eq:RelaxedInc_Theorem}-\eqref{eq:Inc_Theorem} have support in $K_{\tau}$. Since every trajectory of \eqref{eq:Inc_Theorem} is also a trajectory of the relaxed inclusion \eqref{eq:RelaxedInc_Theorem}, it holds 
\begin{equation}
\label{eq:ValueFunction}
\Vcal_{\co}(\tau,\mu_{\tau}) \leq \Vcal(\tau,\mu_{\tau}). 
\end{equation}
Conversely, let $\mu^*(\cdot)$ be a trajectory of \eqref{eq:RelaxedInc_Theorem}. By Theorem \ref{thm:RelaxationWass}, there exists a sequence $(\mu_N(\cdot)) \subset \AC([0,T],\Pcal(K_{\tau}))$ of solution of \eqref{eq:Inc_Theorem} such that 
\begin{equation*}
\sup_{t \in [\tau,T]} W_1(\mu_N(t),\mu^*(t)) \underset{N \rightarrow +\infty}{\longrightarrow} 0.
\end{equation*}
Recalling that $\varphi(\cdot)$ is continuous, we deduce that for every $\epsilon > 0$ there exists an integer $N_{\epsilon} \geq 1$, such that 
\begin{equation}
\label{eq:EpsilonIneq}
\Vcal(\tau,\mu_{\tau}) \leq \varphi(\mu_N(T)) \leq \varphi(\mu^*(T)) + \epsilon,
\end{equation}
for every $N \geq N_{\epsilon}$. Thus taking the infimum over the trajectories $\mu^*(\cdot)$ of \eqref{eq:RelaxedInc_Theorem} in \eqref{eq:EpsilonIneq}, we recover that 
\begin{equation*}
\Vcal(\tau,\mu_{\tau}) \leq \Vcal_{\co}(\tau,\mu_{\tau}) + \epsilon, 
\end{equation*}
for every $\epsilon > 0$, which together with \eqref{eq:ValueFunction} yields that $\Vcal(\tau,\mu_{\tau}) = \Vcal_{\co}(\tau,\mu_{\tau})$. 
\end{proof}

%%%%%%%%%%%%%%%%%%%%%%%%%%%%%%%%%%%%%%%%%%%%%%%%%%%%%%%%%%%%%%%%%%%%%%%%%%%%%%%%%%%%%%%
% 								    NEW SECTION AHEAD                                 %
%%%%%%%%%%%%%%%%%%%%%%%%%%%%%%%%%%%%%%%%%%%%%%%%%%%%%%%%%%%%%%%%%%%%%%%%%%%%%%%%%%%%%%%

\section{Existence of mean-field optimal controls for a Mayer problem}
\label{section:OCP}

In this section, we apply the set-theoretic tools of Section \ref{section:DiffInc} to study the Mayer optimal control problem
\begin{equation*}
(\Ppazo) ~ \left\{
\begin{aligned}
\min_{u(\cdot) \in \U} & \big[ \varphi(\mu(T)) \big] \\
\text{s.t.} ~~ & \left\{
\begin{aligned}
& \partial_t \mu(t) + \Div \big( v(t,\mu(t),u(t)) \mu(t) \big) = 0, \\ 
& \mu(0) = \mu^0, 
\end{aligned}
\right. \\
\text{and} ~~ & \left\{
\begin{aligned}
\mu(t) & \in \K(t) ~~ \text{for all times $t \in [0,T]$},\\
\mu(T) & \in \Qpazo_T.
\end{aligned}
\right.
\end{aligned}
\right.
\end{equation*}
The minimisation in $(\Ppazo)$ is taken over the set of admissible controls $\U$ of measurable selections $t \in [0,T] \mapsto u(t) \in U(t,\mu(t))$ where $U(\cdot,\cdot)$ is a family of compact subsets in the metric space $(\Ucal,d_{\Ucal})$, and
\begin{equation*}
v : (t,\mu,u) \in [0,T] \times \Pcal_c(\R^d) \times \Ucal \rightarrow C^0(\R^d,\R^d),
\end{equation*}
is a controlled non-local vector field. We define the set-valued map $V : [0,T] \times \Pcal_c(\R^d) \rightrightarrows C^0(\R^d,\R^d)$ by 
\begin{equation}
\label{eq:SetValuedControlDyn}
V(t,\mu) := v \big( t,\mu, U(t,\mu) \big),
\end{equation}
for $\Lcal^1$-almost every $t \in [0,T]$ and all $\mu \in \Pcal_c(\R^d)$. 

\begin{taggedhyp}{\textbn{(OCP)}}
\label{hyp:OCP}
Suppose that for any compact set $K \subset \R^d$, the following holds.
\begin{enumerate}
\item[$(i)$] The set-valued map $(t,\mu) \mapsto U(t,\mu) \subset \Ucal$ is $\Lcal^1$-measurable in $t \in [0,T]$ and continuous with respect to $\mu \in \Pcal_c(\R^d)$ in the $W_1$-metric.
\item[$(ii)$] The map $(t,\mu,u,x) \mapsto v(t,\mu,u)(x) \in \R^d$ is $\Lcal^1$-measurable with respect to $t \in [0,T]$ and continuous with respect to $(u,x) \in \Ucal \times \R^d$. Moreover, there exists $m \in L^1([0,T],\R_+)$ such that for $\Lcal^1$-almost every $t \in [0,T]$ it holds
\begin{equation*}
\big| v(t,\mu,u)(x) \big| \leq m(t) \big( 1 + |x| + \M_1(\mu) \big),
\end{equation*}
for all $(\mu,u,x) \in \Pcal_c(\R^d) \times \Ucal \times \R^d$. 

There exist two maps $l_K,L_K \in L^1([0,T],\R_+)$ such that for $\Lcal^1$-almost every $t \in [0,T]$, we have
\begin{equation*}
\Lip(v(t,\mu,u)\, ; K) \leq l_K(t), 
\end{equation*}
for any $(\mu,u) \in \Pcal(K) \times \Ucal$, and 
\begin{equation*}
\big\| v (t,\mu,u) - v(t,\nu,u) \big\|_{C^0(K,\R^d)} \leq L_K(t) W_1(\mu,\nu), 
\end{equation*}
for any $\mu,\nu \in \Pcal(K)$ and $u \in \Ucal$.  
\item[$(iii)$] The set-valued map $(t,\mu) \in [0,T] \times \Pcal_c(\R^d) \rightrightarrows V(t,\mu)$ has convex values.
\item[$(iv)$] The final cost $\mu \in \Pcal_c(\R^d) \mapsto \varphi(\mu) \in \R $ is lower-semicontinuous over $\Pcal(K)$ in the $W_1$-metric. 
\item[$(v)$] The running and final constraint sets $t \in [0,T] \mapsto \K(t) \subset \Pcal_c(\R^d)$ and $\Qpazo_T \subset \Pcal_c(\R^d)$ are $W_1$-closed.
\end{enumerate}
\end{taggedhyp}

\begin{rmk}
Hypotheses \ref{hyp:OCP}-$(ii)$ are a natural extension of the classical assumptions of control theory to Wasserstein spaces, guaranteeing  that to every measurable control corresponds a unique solution defined on the whole time interval $[0,T]$ of the controlled continuity equation. Hypotheses \ref{hyp:OCP}-$(iii)$, $(iv)$ and $(v)$ are classical for  the existence of optimal solutions.  Assumption \ref{hyp:OCP}-$(i)$ is needed to link the trajectories of an associated differential inclusion to those of  the control system.  When the sets $U(\cdot,\cdot)$ are independent from both time and states, it simply means that measurable controls take their values in a compact set. 
\end{rmk}

First, we prove that the set of solutions of the control system driving $(\Ppazo)$ coincides with the solution set of the differential inclusion with $V(\cdot,\cdot)$ defined by \eqref{eq:SetValuedControlDyn}.

\begin{prop}
\label{prop:DiffInc_Control}
Suppose that \ref{hyp:OCP}-$(i)$ and \ref{hyp:OCP}-$(ii)$ hold, and that the set-valued map $V : [0,T] \times \Pcal_c(\R^d) \rightrightarrows C^0(\R^d,\R^d)$ is defined as in \eqref{eq:SetValuedControlDyn}. Then, a curve $\mu(\cdot)$ solves \eqref{eq:WassInc} if and only if it is a solution of the controlled dynamics generated by some $u(\cdot) \in \U$.
\end{prop}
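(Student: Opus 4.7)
The proposition is essentially an equivalence between a classical control formulation and a differential inclusion formulation of the same dynamics. The plan is to handle the two implications separately: the ``control $\Rightarrow$ inclusion'' direction is a direct verification, while the ``inclusion $\Rightarrow$ control'' direction amounts to a measurable selection (Filippov implicit function) argument.

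\medskip

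\textbf{The easy direction.} Assume $\mu(\cdot)$ is generated by an admissible $u(\cdot) \in \U$, i.e.\ $u(\cdot)$ is an $\Lcal^1$-measurable selection of $t \mapsto U(t,\mu(t))$ and $\mu(\cdot)$ satisfies the continuity equation driven by $w(t,\cdot) := v(t,\mu(t),u(t))(\cdot)$. Clearly $w(t) \in V(t,\mu(t))$. It remains to check that $t \mapsto w(t)$ is a measurable selection in the sense required by Definition \ref{def:WassInc}, i.e.\ that for any compact $K \subset \R^d$, $t \mapsto w(t)_{|K}$ is $\Lcal^1$-measurable with values in $C^0(K,\R^d)$. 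This follows from \ref{hyp:OCP}-$(ii)$ and $(i)$: the map $(t,u) \mapsto v(t,\mu(t),u)_{|K}$ is Carathéodory (measurable in $t$, continuous in $u$, and $\mu(\cdot)$ is continuous in time), and the composition with the measurable selection $u(\cdot)$ produces a measurable map into $C^0(K,\R^d)$ by a standard argument on Carathéodory maps composed with measurable selections.

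\medskip

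\textbf{The hard direction.} Assume $(\mu(\cdot),w(\cdot))$ is a trajectory-selection pair for \eqref{eq:WassInc}, so $w(t) \in v(t,\mu(t),U(t,\mu(t)))$ for a.e.\ $t \in [0,T]$, and $\mu(\cdot)$ satisfies \eqref{eq:ContinuityEquations} with velocity field $w(\cdot)$. I want to produce a measurable selection $u(\cdot) \in \U$ such that $w(t) = v(t,\mu(t),u(t))$ for a.e.\ $t$. Define the set-valued map
\begin{equation*}
\Sigma(t) := \big\{ u \in U(t,\mu(t)) ~\text{s.t.}~ v(t,\mu(t),u) = w(t) \big\} \subset \Ucal.
\end{equation*}
By construction, $\Sigma(t) \neq \emptyset$ for $\Lcal^1$-a.e.\ $t$. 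Fix a compact $K \supset \supp(\mu(t))$ uniformly in $t$ (provided by Theorem \ref{thm:Existence}); using the continuity of $u \mapsto v(t,\mu(t),u)_{|K}$ in $C^0(K,\R^d)$ and the compactness of $U(t,\mu(t)) \subset \Ucal$, the set $\Sigma(t)$ is closed. Then I would apply Theorem \ref{thm:Selection} to $\Sigma(\cdot)$, which requires verifying its $\Lcal^1$-measurability.

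\medskip

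\textbf{Main obstacle: measurability of $\Sigma(\cdot)$.} This is the crux of the proof and corresponds exactly to Filippov's implicit function theorem. I would argue as follows: first, $t \mapsto U(t,\mu(t))$ is $\Lcal^1$-measurable because $U(\cdot,\cdot)$ is Carathéodory (measurable in $t$, continuous in $\mu$) by \ref{hyp:OCP}-$(i)$ and $\mu(\cdot)$ is continuous; second, the map $(t,u) \mapsto v(t,\mu(t),u)_{|K} - w(t)_{|K}$ is Carathéodory into $C^0(K,\R^d)$ by \ref{hyp:OCP}-$(ii)$ and the measurability of $w(\cdot)$. Consequently, the graph of $\Sigma$ is the intersection of the (measurable) graph of $t \rightrightarrows U(t,\mu(t))$ with the preimage of $\{0\}$ under a Carathéodory map, hence jointly $\Lcal^1 \otimes \Bcal(\Ucal)$-measurable. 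The projection of a measurable graph yields a measurable set-valued map with closed images, so Theorem \ref{thm:Selection} produces a measurable selection $u(t) \in \Sigma(t)$, which is the desired admissible control with $v(t,\mu(t),u(t)) = w(t)$ a.e. The verification that this $u(\cdot)$ belongs to $\U$ and that $\mu(\cdot)$ is indeed the trajectory it generates then follows by construction and uniqueness of the continuity equation under \ref{hyp:OCP}-$(ii)$.
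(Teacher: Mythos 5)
Your proposal is correct and follows essentially the same route as the paper: the forward direction is the same direct verification, and your hard direction, with the level sets $\Sigma(t)$ and the graph-measurability argument, is precisely an unpacking of the implicit measurable selection theorem \cite[Thm.~8.2.8]{Aubin1990} that the paper invokes as a black box alongside Theorem \ref{thm:Selection}. The only cosmetic difference is that the paper works with the restrictions $v_{|K}$ on the uniform support $K$ from the outset, which you also do implicitly when establishing closedness of $\Sigma(t)$.
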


\begin{proof}
Let $\mu(\cdot) \in \AC([0,T],\Pcal_c(\R^d))$ be an admissible curve for $(\Ppazo)$ driven by a control map $u(\cdot) \in \U$. By construction, the time-dependent velocity field $\vb : t \in [0,T] \mapsto v(t,\mu(t),u(t)) \in C^0(\R^d,\R^d)$ is such that $\vb(t) \in V(t,\mu(t))$ for $\Lcal^1$-almost every $t \in [0,T]$, so that $\mu(\cdot)$ solves \eqref{eq:WassInc} with $V(\cdot,\cdot)$ as in \eqref{eq:SetValuedControlDyn}.

Conversely, suppose that $\mu(\cdot) \in \AC([0,T],\Pcal_c(\R^d))$ is a solution of \eqref{eq:WassInc} driven by $V(\cdot,\cdot)$. Notice that as a consequence of hypothesis \ref{hyp:OCP}-$(ii)$, the set valued map $V(\cdot,\cdot)$ defined in \eqref{eq:SetValuedControlDyn} satisfies hypotheses \ref{hyp:D}. Let $K \subset \R^d$ be the compact set given by Theorem \ref{thm:Existence}, i.e. $\supp(\mu(t)) \subset K$ for all times $t \in [0,T]$. By Definition \ref{def:WassInc}, there exists a measurable selection $t \in [0,T] \mapsto \vb(t) \in V_K(t,\mu(t))$ such that 
\begin{equation*}
\partial_t \mu(t) + \Div \big( \vb(t) \mu(t) \big) = 0. 
\end{equation*}
Moreover, observe that
\begin{equation*}
V_K(t,\mu(t)) = v_{|K} \big(t,\mu(t),U(t,\mu(t)) \big) \subset C^0(K,\R^d),
\end{equation*}
where $(t,u) \mapsto v_{|K}(t,\mu(t),u)$ is $\Lcal^1$-measurable with respect to $t \in [0,T]$ and continuous with respect to $u \in \Ucal$. Thus, we can apply Theorem \ref{thm:Selection} and \cite[Thm. 8.2.8]{Aubin1990} to recover the existence of a measurable selection $t \in [0,T] \mapsto u(t) \in U(t,\mu(t))$ such that $\vb(t) = v_{|K}(t,\mu(t),u(t))$ for $\Lcal^1$-almost every $t \in [0,T]$. Whence, $\mu(\cdot)$ is admissible for $(\Ppazo)$ and generated by the control map $t \in [0,T] \mapsto u(t) \in U(t,\mu(t))$.
\end{proof}

Using the above result, we can show that under hypotheses \ref{hyp:OCP}, problem $(\Ppazo)$ admits a solution.

\begin{thm}
Suppose that hypotheses \ref{hyp:OCP} hold. Then, there exists an optimal trajectory-control pair $(\mu(\cdot),u(\cdot)) \in \AC([0,T],\Pcal_c(\R^d)) \times \U$ for $(\Ppazo)$. 
\end{thm}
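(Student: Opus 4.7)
The plan is to apply the direct method of the calculus of variations, leveraging the compactness of the solution set of the associated differential inclusion provided by Theorem \ref{thm:Compactness} together with Proposition \ref{prop:DiffInc_Control} to pass back and forth between the control and inclusion formulations.

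First, I would assume that the admissible set of $(\Ppazo)$ is non-empty (otherwise the statement is vacuous) and set $\alpha := \inf (\Ppazo)$. I would then choose a minimising sequence of admissible trajectory-control pairs $(\mu_N(\cdot), u_N(\cdot))_{N \geq 1}$, so that $\varphi(\mu_N(T)) \to \alpha$. By Proposition \ref{prop:DiffInc_Control}, each curve $\mu_N(\cdot)$ is a trajectory of the differential inclusion \eqref{eq:WassInc} driven by the set-valued map $V(\cdot,\cdot)$ defined in \eqref{eq:SetValuedControlDyn}. One first checks that $V(\cdot,\cdot)$ satisfies \ref{hyp:D} and has convex values: the growth, Lipschitz-in-space and Lipschitz-in-measure bounds come from \ref{hyp:OCP}-$(ii)$, convexity of the images is exactly \ref{hyp:OCP}-$(iii)$, and closedness of $V_K(t,\mu)$ follows from the compactness of $U(t,\mu)$ together with the continuity of $u \mapsto v(t,\mu,u)$.

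Since $\mu_N(0) = \mu^0$ belongs to some common ball, Theorem \ref{thm:Compactness} yields, along a subsequence, a limit curve $\mu^*(\cdot) \in \AC([0,T],\Pcal_c(\R^d))$ which is itself a trajectory of \eqref{eq:WassInc} and satisfies
\begin{equation*}
\sup_{t \in [0,T]} W_1(\mu_N(t), \mu^*(t)) \underset{N \to +\infty}{\longrightarrow} 0.
\end{equation*}
Admissibility of $\mu^*(\cdot)$ then follows from the $W_1$-closedness of $\K(t)$ and $\Qpazo_T$ provided by \ref{hyp:OCP}-$(v)$: since $\mu_N(t) \in \K(t)$ and $\mu_N(T) \in \Qpazo_T$ for every $N$ and $t$, the uniform $W_1$-convergence yields $\mu^*(t) \in \K(t)$ for all $t \in [0,T]$ and $\mu^*(T) \in \Qpazo_T$. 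The lower semi-continuity assumption \ref{hyp:OCP}-$(iv)$ then gives $\varphi(\mu^*(T)) \leq \liminf_{N \to +\infty} \varphi(\mu_N(T)) = \alpha$, so that $\alpha$ is finite and attained by $\mu^*(\cdot)$. Finally, applying Proposition \ref{prop:DiffInc_Control} in the converse direction to $\mu^*(\cdot)$ produces a measurable control $u^*(\cdot) \in \U$ such that the admissible pair $(\mu^*(\cdot), u^*(\cdot))$ realises the infimum.

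The main technical obstacle is the passage to the limit in the differential inclusion via Theorem \ref{thm:Compactness}, which rests crucially on the convexity assumption \ref{hyp:OCP}-$(iii)$; without this assumption one would only obtain $\mu^*(\cdot)$ as a solution of the relaxed inclusion \eqref{eq:RelaxedInc_Theorem}, and would need to invoke the Relaxation Theorem \ref{thm:RelaxationWass} to approximate it — which is not enough to produce an honest minimiser. The other delicate point, namely verifying \ref{hyp:D} for $V(\cdot,\cdot)$ of the form \eqref{eq:SetValuedControlDyn}, is essentially a rerun of the measurable selection arguments of Proposition \ref{prop:DiffInc_Control}.
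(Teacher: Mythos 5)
Your proposal is correct and follows essentially the same route as the paper's proof: a minimising sequence, Theorem \ref{thm:Compactness} applied to the associated differential inclusion (after checking \ref{hyp:D} and convexity via \ref{hyp:OCP}-$(ii)$,$(iii)$), closedness of the constraints under uniform $W_1$-convergence, lower semi-continuity of $\varphi$, and Proposition \ref{prop:DiffInc_Control} to recover an admissible control for the limit curve. No substantive differences to report.
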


\begin{proof}
Let $(u_N(\cdot))$ be a minimising sequence for $(\Ppazo)$ and $(\mu_N(\cdot))$ be the corresponding admissible trajectories. By \ref{hyp:OCP}-$(ii)$, we know that $V(\cdot,\cdot)$ defined in \eqref{eq:SetValuedControlDyn} satisfies \ref{hyp:D}, and that is has convex values by \ref{hyp:OCP}-$(iii)$. Thus by Theorem \ref{thm:Compactness}, there exists a trajectory-selection pair $(\mu^*(\cdot),\vb^*(\cdot))$ solution of \eqref{eq:WassInc} such that 
\begin{equation}
\label{eq:ConvW1}
\sup_{t \in [0,T]} W_1(\mu_N(t),\mu^*(t)) \underset{N \rightarrow +\infty}{\longrightarrow} 0,
\end{equation}
along an unrelabelled subsequence. Furthermore by Proposition \ref{prop:DiffInc_Control}, there exists an admissible control $t \in [0,T] \mapsto u^*(t) \in U(t,\mu^*(t))$ such that $\vb(t) = v(t,\mu^*(t),u^*(t))$ for $\Lcal^1$-almost every $t \in [0,T]$. 

Since $\K(t)$ and $\Qpazo_T$ are closed in the $W_1$-topology by \ref{hyp:OCP}-$(v)$, we have by \eqref{eq:ConvW1} that for all times $t \in [0,T]$
\begin{equation*}
\mu^*(t) \in \K(t) \quad \text{and} \quad \mu^*(T) \in \Qpazo_T.
\end{equation*}
Moreover, $\varphi(\cdot)$ is lower-semicontinuous in the $W_1$-metric as a consequence of \ref{hyp:OCP}-$(iv)$, so that it holds
\begin{equation*}
\varphi(\mu^*(T)) \leq \liminf_{N \rightarrow +\infty} \varphi(\mu_N(T)).
\end{equation*}
Whence, $(\mu^*(\cdot),u^*(\cdot))$ is optimal for $(\Ppazo)$. 
\end{proof}

%%%%%%%%%%%%%%%%%%%%%%%%%%%%%%%%%%%%%%%%%%%%%%%%%%%%%%%%%%%%%%%%%%%%%%%%%%%%%%%%%%%%%%%
% 								    NEW SECTION AHEAD                                 %
%%%%%%%%%%%%%%%%%%%%%%%%%%%%%%%%%%%%%%%%%%%%%%%%%%%%%%%%%%%%%%%%%%%%%%%%%%%%%%%%%%%%%%%

\section{Example of application}
\label{section:Example}

In this section, we illustrate the results of Sections \ref{section:DiffInc} and \ref{section:OCP} on an example of \textit{leader-follower} problem. This type of formulation frequently appears in the modelling of multi-agent systems (see e.g. \cite{MFPMP,Burger2020,FPR}). 

We will henceforth consider an optimal evacuation problem. Therein, a set of \textit{leaders} $\yb := \{y_i(\cdot)\}_{i=1}^M$ who can control their own accelerations $\ub := \{u_i(\cdot)\}_{i=1}^M$ via 
\begin{equation*}
\dot y_i(t) = w_i(t), \qquad \dot w_i(t) = u_i(t), 
\end{equation*}
aim at maximising the fraction of a crowd $\mu(\cdot)$ that reaches a closed target safety area $\Spazo \subset \R^d$ at time $T >0$,
\begin{equation*}
\max_{\ub(\cdot) \in \U} \INTDom{\mathds{1}_{\Spazo}(x)}{\R^{2d}}{\mu(T)(x,v)}. 
\end{equation*}
This type of final-time density maximisation has already been considered e.g. in \cite{Pogodaev2016}. 

We suppose that the controls $\ub(\cdot)$ have values in the state-dependent admissible sets
\begin{equation}
\label{eq:MixedControl}
U(\mu,\yb) = \Big\{ \ub \in (\R^d)^M ~\text{s.t.}~ |u_i| \leq C \Big(1 - \eta(\rho \star \mu)(y_i) \Big) \Big\}, 
\end{equation}
where $C >0$, $\eta \in [0,1]$, $\rho \in C^{\infty}_c(\R^d)$ is a smooth unitary mollifier with compact support and $(\rho \star \mu)$ is the convolution operator defined by
\begin{equation*}
(\rho \star \mu)(x) := \INTDom{\rho(x-x')}{\R^{2d}}{\mu(x',v')}.
\end{equation*}
This constraint encodes a form of ``soft congestion'' effect on the leaders, which amounts for the fact that they cannot move as fast as they wish if they are closely surrounded by too many agents. Introducing the empirical measures $\nu_M(\cdot) := \tfrac{1}{M} \sum_{i=1}^M \delta_{(y_i(\cdot),w_i(\cdot))}$, the dynamics of the leaders can be rewritten as  
\begin{equation*}
\partial_t \nu_M(t) + \Div \Big( \Wpazo(t,\ub(t))\nu_M(t) \Big) = 0, 
\end{equation*}
with a driving velocity field $ \Wpazo : (t,\ub) \in [0,T] \times(\R^d)^M \rightarrow C^0(\R^{2d},\R^{2d})$ defined by $\Wpazo(t,\ub)(y,w) := (w,\ub)^{\top}$. 

The dynamics of the crowd is modelled by a curve of densities $\mu(\cdot) \in \AC([0,T],\Pcal_c(\R^{2d}))$, whose evolution follows the non-local continuity equation 
\begin{equation*}
\partial_t \mu(t) + \Div \Big( \Vpazo (t,\mu(t),\nu_M(t)) \mu(t) \Big) = 0.
\end{equation*}
Here, the non-local velocity field $\Vpazo:(t,\mu,\nu) \in [0,T] \times \Pcal_c(\R^d) \times \Pcal_c(\R^d) \rightarrow C^0(\R^{2d},\R^{2d})$ writes
\begin{equation}
\label{eq:VelocityExample}
\Vpazo(t,\mu,\nu)(x,v) := \begin{pmatrix}
v \\
\Big( \Phi \star \mu(t) + \phi \star \nu(t) \Big)(x,v)
\end{pmatrix},
\end{equation}
where $\phi(\cdot),\Phi(\cdot)$ are defined respectively by 
\begin{equation*}
\left\{
\begin{aligned}
\phi(x,v) & := - \tfrac{K}{(\sigma + |x|)^{2\beta}} v , \\
\Phi(x,v) & := R_1 \exp \big( \hspace{-0.1cm} - \hspace{-0.1cm} \tfrac{|x|}{R_2} \big)v - A_1 \exp \big( \hspace{-0.1cm} - \hspace{-0.1cm} \tfrac{|x|}{A_2} \big)v,
\end{aligned}
\right.
\end{equation*}
for given constants $K, \sigma,\beta,R_1,R_2,A_1,A_2 \geq 0$, and $(\star)$ is the convolution operator. The acceleration term $(\Phi \star \mu)$ in \eqref{eq:VelocityExample} is the derivative of a \textit{Morse potential} (see e.g. \cite{Carrillo2013}), which encodes short range repulsions and long range attractions amongst the agents, while $(\phi \star \nu_M)$ is a \textit{Cucker-Smale} type kernel (see \cite{CS1}) which enforces the alignment of the velocities of $\mu(\cdot)$ with that of $\nu_M(\cdot)$.

We also impose an extra constraint that the crowd of agents must remain within a closed safety region $\Hpazo \subset \R^d$ during the whole evacuation process. This running constraint can be encoded in the spirit e.g. of \cite{Cavagnari2018} by functionals inequalities of the form
\begin{equation*}
\Lambda(\mu(t)) := \INTDom{d_{\Hpazo}(x)}{\R^{2d}}{\mu(t)(x,v)} \, \leq 0, \\
\end{equation*}
for all times $t \in [0,T]$, where $x \in \R^d \mapsto d_{\Hpazo}(x)$ denotes the Euclidean distance from $x \in \R^d$ to $\Hpazo \subset \R^d$.

We therefore consider the optimal control problem
\begin{equation*}
(\Ppazo) ~ \left\{
\begin{aligned}
\max_{\ub(\cdot) \in \U} & \INTDom{\mathds{1}_{\Spazo}(x)}{\R^{2d}}{\mu(T)(x,v)} \\
\text{s.t.} ~~ & \left\{
\begin{aligned}
& \partial_t \mu(t) + \Div \Big( \Vpazo(t,\mu(t),\nu_M(t)) \mu(t) \Big) = 0, \\ 
& \mu(0) = \mu^0, \\
& \partial_t \nu_M(t) + \Div \Big( \Wpazo(t,\ub(t)) \nu_M(t) \Big) = 0, \\
& \nu_M(0) = \tfrac{1}{M} \mathsmaller{\sum_{i=1}^M} \delta_{(y_i^0,w_i^0)}, 
\end{aligned}
\right. \\
\text{and} ~~ & \left\{
\begin{aligned}
& \ub(t) \in U(\mu(t),\nu_M(t)), \\
& \Lambda(\mu(t)) \leq 0,
\end{aligned}
\right.
\end{aligned}
\right.
\end{equation*}
where $U(\mu,\nu_M)$ is defined as in \eqref{eq:MixedControl} by identifying the empirical measure $\nu_M$ with the set of points $\yb \in (\R^d)^M$ where it is supported. It can be verified that all the objects involved in $(\Ppazo)$ satisfy hypotheses \ref{hyp:OCP} of Section \ref{section:OCP}. Therefore, problem $(\Ppazo)$ has an optimal solution. 

%%%%%%%%%%%%%%%%%%%%%%%%%%%%%%%%%%%%%%%%%%%%%%%%%%%%%%%%%%%%%%%%%%%%%%%%%%%%%%%%%%%%%%%
% 								    NEW SECTION AHEAD                                 %
%%%%%%%%%%%%%%%%%%%%%%%%%%%%%%%%%%%%%%%%%%%%%%%%%%%%%%%%%%%%%%%%%%%%%%%%%%%%%%%%%%%%%%%

\section{Future work}

In the future, we aim at applying our set-valued approach of mean-field control to several topics. We are currently investigating first and second order necessary optimality conditions in the spirit of \cite{Frankowska2018}. We also aim at studying the \textit{sensitivity relations} (see e.g. \cite{Cannarsa1991}) which relate the Pontryagin costate of an optimal control problem to the super-differential of the value function.

%%%%%%%%%%%%%%%%%%%%%%%%%%%%%%%%%%%%%%%%%%%%%%%%%%%%%%%%%%%%%%%%%%%%%%%%%%%%%%%%%%%%%%%
% 								    NEW SECTION AHEAD                                 %
%%%%%%%%%%%%%%%%%%%%%%%%%%%%%%%%%%%%%%%%%%%%%%%%%%%%%%%%%%%%%%%%%%%%%%%%%%%%%%%%%%%%%%%

{\footnotesize
\begin{flushleft}
\textbf{Acknowledgements :} ``This material is based upon work supported by the Air Force Office of Scientific Research under award number FA9550-18-1-0254.''
\end{flushleft}
}

{\footnotesize
\bibliographystyle{plain}
\bibliography{../ControlWassersteinBib}

\end{document}